\newtheorem{theorem}{Theorem}[section]
  \newtheorem{lemma}[theorem]{Lemma}
  \newtheorem*{thmsys-lam}{Theorem~\ref{thm:sys-lam}}
  \newtheorem*{thmlam-sys}{Theorem~\ref{thm:lam-sys}}
  \newtheorem*{thmequi-iso}{Theorem~\ref{thm:equi-iso}}
\theoremstyle{definition}
  \newtheorem{definition}[theorem]{Definition}
  \newtheorem{example}[theorem]{Example}
\theoremstyle{remark}
  \newtheorem{remark}[theorem]{Remark}
\newcommand\B{\mathcal{B}}              
\newcommand\Z{\mathbb{Z}}               
\newcommand\N{\mathbb{N}}               
\newcommand\mR{\mathbb{R}}              
\newcommand\K{\mathcal{K}}              
\newcommand\De{\mathfrak{D}}            
\newcommand\Bas{\mathfrak{B}}           
\newcommand\D{\mathbb{D}}               
\newcommand\Sec{\mathcal{S}}            
\newcommand\st{\mid}                    
\newcommand\bigst{\bigm|}               
\DeclareMathOperator\dom{dom}           
\DeclareMathOperator\sing{Sing}         
\newcommand\A{\mathcal{A}}              
\newcommand\cL{\mathcal{L}}             
\DeclareMathOperator\id{\mathrm{id}}    
\DeclareMathOperator\cod{codim}         
\title{Codimension zero laminations are inverse limits}
\author{\'Alvaro Lozano Rojo}
\email{alvarolozano@unizar.es}
\address{Centro Universitario de la Defensa.
Academia General Militar.
Ctra. Huesca s/n.
E-50090 Zaragoza, Spain - and - IUMA, Universidad de Zaragoza.}
\subjclass[2010]{57R30, 37B05, 37B45}
\keywords{Lamination, branched manifold, inverse limit, equicontinuity.}
\begin{document}

\begin{abstract}
  The aim of the paper is to investigate the relation between inverse limit 
  of branched manifolds and codimension zero laminations. We give necessary 
  and sufficient conditions for such an inverse limit to be a lamination. We 
  also show that codimension zero laminations are inverse limits of branched 
  manifolds.
  
  The inverse limit structure allows us to show that equicontinuous 
  codimension zero laminations preserves a distance function on transversals.
\end{abstract}

\maketitle

\section{Introduction}
\label{sec:intro}

Consider the circle $\mathbb{S}^1=\{\,z\in\mathbb{C}\st\lvert z\rvert=1\,\}$ 
and the cover of degree $2$ of it $p_2(z)=z^2$. Define the inverse limit
\[
  \mathbf{S}_2 = \varprojlim(\mathbb{S}^1,p_2)
               = \bigl\{\, (z_k) \in{\textstyle\prod_{k\geq0}\mathbb{S}^1}
                                                \bigst z_k^2=z_{k-1} \,\bigr\}.
\]
This space has a natural foliated structure given by the flow $\Phi_t(z_k) = 
( e^{2\pi i t/2^k}z_k )$. The set $X=\{\,(z_k)\in\mathbf{S}_2\st z_0 = 1\,\}$ 
is a complete transversal for the flow homeomorphic to the Cantor set. This 
space is called \emph{solenoid}. This construction can be generalized 
replacing $\mathbb{S}^1$ and $p_2$ by a sequence of compact $p$-manifolds and 
submersions between them. The spaces obtained this way are compact laminations 
with $0$ dimensional transversals.
\medskip


This construction appears naturally in the study of dynamical systems. 
In \cite{williams:ODNWS, williams:EA} R.F.~Williams proves that an expanding 
attractor of a diffeomorphism of a manifold is homeomorphic to the inverse 
limit 
\[
  S\xleftarrow{\;f\;} S \xleftarrow{\;f\;} S \xleftarrow{\;f\;} \cdots
\]
where $f$ is a surjective immersion of a branched manifold $S$ on itself. A 
branched manifold is, roughly speaking, a CW-complex with tangent space at 
each point.

After their introduction by Williams, branched manifolds and their limits have 
been extensively used in the study of dynamical systems and foliations. For 
example W.~Thurston uses \emph{train tracks} (1-dimen\-sional 
branched manifolds) in geodesic laminations on hyperbolic surfaces 
\cite{thurston-3mani}. Later, J.~Anderson and I.~Putnam show \cite{AP:TISTC*A} 
that substitution tiling spaces are inverse limits of a CW-complex as in the 
case of Williams. J.~Bellisard, R.~Benedetti and J-M.~Gambaudo 
\cite{BBG:STFTAGL}, F.~G\"ahler \cite{gahler:CQT} and L.~Sadun 
\cite{sadun2003} independently extended this result to any tiling, showing 
that they are inverse limits of branched manifolds. In this case, the 
projective system has different branched surfaces at each level. With a 
similar scheme as in \cite{BBG:STFTAGL} R.~Benedetti and J.M.~Gambaudo has 
extended in \cite{BG:DGSADS} the previous result to $\mathbb{G}$-solenoids 
(free actions of a Lie group $\mathbb{G}$ with transverse Cantor structure).

F.~Alcalde Cuesta, M.~Macho Stadler and the author prove in \cite{ALM:TCLIL} 
that any compact without holonomy minimal lamination of codimension $0$ is an 
inverse limit, generalizing all previous results.
\medskip

In this article, we thoroughly explore the relation between inverse limits of 
branched manifolds and laminations with zero dimensional transverse structure. 
Let us start considering the following example. Take the eight figure
\[
  K = \mathbb{S}^1\wedge_1\mathbb{S}^1 
    = \nicefrac{\mathbb{S}^1\times \{1,2\}}{(1,1)\sim(1,2)},
\]
that is, the two copies of the circle glued by the $1$. For each copy of 
$\mathbb{S}^1$ we have the degree two covering $p_2$, so we can define 
$P_2([z,i]) = [z^2,i]$, where $[z,i]\in K$. Let $X$ be the inverse limit 
$\varprojlim(K,P_2)$. It is easy to see that $X$ is homeomorphic to
$\mathbf{S}_2\wedge_{(1)} \mathbf{S}_2$, i.e., two copies of the solenoid 
glued by the sequence $(1)\in \mathbf{S}_2$. It is clear that it is not a 
lamination. The problem is that $P_2$ does not \emph{iron out} the branching 
in each step, collapsing the branches at branching points to one single disk. 
The kind of maps doing that are called \emph{flattening} \cite{BBG:STFTAGL}. 
\medskip

We have three main theorems in the paper. Firstly, we show that this is a 
necessary and sufficient condition on an inverse systems to obtain a 
lamination as it limit:

\begin{thmsys-lam}
  Fix a projective system $(B_k,f_k)$ where $B_k$ are branched $n$-manifolds 
  and $f_k$ cellular maps, both of class $C^r$. The inverse limit $B_\infty$ 
  of the system is a codimension zero lamination of dimension $n$ and class 
  $C^r$ if and only if the systems is flattening.
\end{thmsys-lam}

Secondly, thinking in laminations as tiling spaces \cite{ALM:TCLIL} we can 
adapt the constructions for tilings \cite{gahler:CQT,sadun2003} to obtain a 
result in the other direction: from laminations to systems of branched 
manifolds. This theorem extends \cite{ALM:TCLIL} to any lamination of 
codimension zero:

\begin{thmlam-sys}
  Any codimension zero lamination $(M,\cL)$ is homeomorphic to an 
  inverse limit $\varprojlim(S_k,f_k)$ of branched manifolds $S_k$ and 
  cellular maps $f_k: S_k\to S_{k-1}$.
\end{thmlam-sys}

Finally, the inverse limit structure can give information of the dynamics 
of the space. M.C.~McCord \cite{mccord:ILSWCM} and recently A.~Clark and 
S.~Hurder \cite{CH:HMM} study an important class of solenoidal spaces, those 
given by real manifolds and regular covering maps as bounding maps. With this 
structure we can conclude that:

\begin{thmequi-iso}
  An equicontinuous lamination of codimension zero preserves a transverse 
  metric.
\end{thmequi-iso}

\noindent\textbf{Acknowledgements.} We would like to thanks Centre de Recerca 
Ma\-te\-m\`a\-ti\-ca for support during the Foliations Program were part of 
this article was written. We also thanks to F.~Alcalde Cuesta and M.~Macho 
Stadler for their valuable comments.


\section{Branched manifolds}
\label{sec:branched}

Let $\D^n$ denote the closed $n$-dimensional unit disk. A \emph{sector} is 
the, eventually empty, interior of the intersection of a (finite) family of 
half-spaces through the origin. Fix a finite family of sectors $\Sec$, and a 
finite directed tree $T$ with a map $s:VT\to\Sec$ where $VT$ is the vertex set 
of $T$.
\medskip

Now define a \emph{local branched model} $U_T$ as the quotient set of 
$\D^n\times VT$ by the relation generated by
\[
  (x,v)\sim (x,v') \iff
  \text{the edge $v\to v'$ exists in $T$ and $x\in \D^n\smallsetminus s(v)$}.
\]
The quotient $D_v\subseteq U_T$ of each set $\D^n\times\{v\}$ is called a 
\emph{smooth disks}. There is a natural map $\Pi_T: U_T \to \D^n$ given by the 
quotient of the first coordinate projection $pr_1 : (x,v) \mapsto x$, which is 
a homeomorphism restricted to each smooth disk.
\begin{figure}
    \begin{tikzcd}[
        column sep=tiny, every label={text depth=1.8cm},
        baseline=(current bounding box.center)
    ]
      \begin{tikzpicture}
        \fill[gray,opacity=0.2, draw=gray,thick, draw opacity=0.5]
                          (.5,0) -- (0,0) -- (0,-.5) arc (-90:0:5mm) -- cycle;
        \fill (0,0) circle (1.5pt);
        \draw (0,0) circle (5mm);
      \end{tikzpicture}
        \ar{dr}{} &   & \ar{dl}{}
      \begin{tikzpicture}
        \fill[gray,opacity=0.2, draw=gray,thick, draw opacity=0.5]
                          (.5,0) -- (0,0) -- (0,-.5) arc (-90:0:5mm) -- cycle;
        \fill (0,0) circle (1.5pt);
        \draw (0,0) circle (5mm);
      \end{tikzpicture} \\ &
      \begin{tikzpicture}
        \fill (0,0) circle (1.5pt);
        \draw (0,0) circle (5mm);
      \end{tikzpicture}
    \end{tikzcd}
    \hspace{1cm}
    \begin{tikzpicture}[
        join=round, line cap=round,baseline=(current bounding box.center)
    ]
       \draw (0,-.5) -- (0,0) -- (1,0)
                      to[out=155, in=55] node[pos=.2] (A) {} (0,-.5) -- cycle;

      \draw (0,-.5) -- (0,0) -- (1,0) .. 
                      controls (1.1,-.5) and +(.1,-.3) .. (0,-.5) -- cycle;
      \shade[bottom color=gray!70, top color=gray!10]
          (A.center) to[out=185,in=55] (0,-.5) -- (0,0) -- cycle;
      \draw (1,0) to[out=155, in=55] (0,-.5) -- (0,0) -- (A.center);
      \draw (0,0) ellipse (1cm and 5mm);
      \fill (0,0) ellipse (2pt and 1pt);
    \end{tikzpicture}

  \caption{A tree and the local model.}
  \label{fig:local_model}
\end{figure}

\begin{definition}
  \label{def:branched}
  A \emph{branched manifold of class $C^r$ of dimension $n$} is a Polish space 
  $S$ endowed with an atlas of closed disks $\{U_i\}$ homeomorphic to local 
  branched models of dimension $n$ such that there is a cocycle of 
  $C^r$-diffeomorphisms $\{\alpha_{ij}\}$ between open sets of $\D^n$ 
  fulfilling $\Pi_i\circ\alpha_{ij} = \Pi_j$, where $\Pi_i$ denotes the 
  natural map of the local models.
\end{definition}

\begin{remark}
  Definition~\ref{def:branched} is not the classical one \cite{williams:EA}. 
  In our setting the branching behavior is quite simple as we have locally
  finite branching. This is not true with the classical definition.
\end{remark}

Following \cite{williams:EA}, there is a natural notion of differentiable map: 
a map $f:S\to S'$ between two branched manifolds of class at least $r\geq 1$ 
is of class $C^r$ if the local map
\[
  f^{T'}_{T,v} : \Pi_T(D_v)
        \xrightarrow{\;(\Pi|_{D_v})^{-1}\;}
            D_v \xrightarrow{\;f\;} U_{T'} \xrightarrow{\;\Pi_{T'}\;} \D^n
\]
is of class $C^r$ and the germ of $f^{T'}_{T,v}$ at any point does not depends 
on $T'$.

Of course, there is the notion of tangent fibre bundle for branched manifolds. 
For each smooth disk $D_v$ of a local model $U_T$ we have the induced tangent 
bundle $(\Pi_T|_{D_v})^* T\mR^n \cong D_v\times\mR^n$. The tangent bundle of 
$U_T$ is just the \emph{gluing} of those, which is naturally isomorphic to 
$U_T\times\mR^n$. Finally, as the change of coordinates are diffeomorphisms 
between smooth disks there is a well defined tangent bundle. Obviously, one 
can define higher bundles provided the differentiability class is high enough. 
As in the classical setting differentiable maps induce bundle maps. Then the 
usual definitions of \emph{submersion} and \emph{immersion} make sense. 
Anyway, we should remark that a submersion does not need to be locally 
surjective and an immersion does not need to be locally injective.


\section{Codimension zero laminations}
\label{sec:mathbox}

Let $M$ be a locally compact Polish space. A \emph{foliated atlas of class 
$C^r$} is an atlas $\A = \{\varphi_i:U_i\to\mR^n\times X_i\}$ of $M$ such that 
the changes of coordinates can be written as
\begin{equation}
  \label{ec:cambio_coor}
  \varphi_{i}\circ\varphi_j^{-1}(x,y) = 
      \bigl(\varphi_{ij}^{y}(x),\sigma_{ij}(y)\bigr),
\end{equation}
where $\sigma_{ij}$ is a homeomorphism and $\varphi_{ij}^{y}$ is a 
$C^r$-diffeomorphism depending continuously on $y$ in the $C^r$-topology. Two 
foliated atlases of class $C^r$ are \emph{equivalent} if the union of them is 
still a foliated atlases. A \emph{lamination} of class $C^r$ is a base space 
$M$ endowed with an equivalence class $\cL$ of foliated atlas. The open sets 
$U_i$ are usually called \emph{flow boxes} and the level sets 
$\varphi_i^{-1}(P_i\times\{y\})$ and $\varphi_i^{-1}(\{x\}\times X_i)$ are the 
\emph{plaques} and \emph{local transversals} of $\A$ respectively. A 
\emph{transversal} of $\cL$ is just a union of local transversals of a 
compatible atlas. The \emph{leaves} $L$ of $\cL$ are the smallest path 
connected sets such that if $L$ meets a plaque $P$, then $P\subset L$. The 
plaques define a $n$-manifold structure of class $C^r$ on each leaf. It is 
usual to identify the lamination $\cL$ with the partition formed by the 
leaves, so it is common to write $L\in\cL$ for a leaf $L$ of $\cL$. Finally, a 
\emph{complete transversal} is a transversal meeting all leaves. The space 
$T=\bigsqcup_i X_i$ is the \emph{complete transversal associated to the atlas 
$\A$}. An atlas $\A$ is \emph{good} if it is finite and
\begin{enumerate}

  \item the flow boxes $U_i$ are relatively compact; 

  \item if $U_i \cap U_j\neq \emptyset$, there is a flow box $U_{ij}$ 
    containing $\overline{U_i \cap U_j}$ and therefore each plaque of $U_i$ 
    intersects at most one plaque of $U_j$.

\end{enumerate}
The codimension of the lamination $\cL$, denoted by $\cod\cL$, is the dimension 
of the complete transversal associated to any atlas. Then we talk about 
codimension zero lamination if the transversal is a locally compact Polish 
$0$-dimensional space.

\begin{remark}
  Given a codimension zero laminations, the leaves of the underlaying 
  laminations are the path connected components, as local transversals are 
  totally disconnected. Hence the foliated structure is intrinsic to the 
  topological space $M$.
\end{remark}

\begin{example}
  Consider the Cantor set $C=\{0,1\}^\Z$ and the shift map 
  $\sigma((x_i))_k = x_{k+1}$, for $(x_i)\in C$. We can consider then the 
  suspension of this homeomorphism: Consider the action of $\Z$ on 
  $\mR\times C$ given by
  \[
    (\lambda,x)+\ell = \bigl(\lambda-\ell,\sigma^\ell(x)\bigr),
  \]
  for $\ell\in\Z$. The horizontal foliation on $\mR\times C$ given by the 
  leaves $\mR\times\{\ast\}$ is invariant under this action, so it induces a 
  lamination of class $C^\infty$ on the quotient $M = \mR\times C/\Z$. For 
  other examples of codimension zero laminations see 
  \cite{ghys:LSR, blanc:EMMFS, lozano:EFDG, lozano:ENUEL}. 
\end{example}

Recall that a pseudogroup of transformations on a space $T$ is a family of 
homeomorphisms from open sets to open sets of $T$ closed under restriction to 
open sets, composition, inversion and extension. To describe the behavior of 
the leaves we need the \emph{holonomy pseudogroup}: Fix a good foliated atlas 
$\A$ for a lamination $\cL$ on $M$. Fix two meeting flow boxes $U_i$ and $U_j$ 
and define the corresponding \emph{holonomy transformation} 
$\sigma_{ij} : D_{ij} \subset X_i \to D_{ji} \subset X_j$ given by
\[
  \sigma_{ij}(y_i) = y_j \iff \text{the plaques through $y_i$ and $y_j$ meet,}
\]
where $D_{ij}$ and $D_{ji}$ are the obvious open sets of $X_i$ and $X_j$ 
respectively. It can be shown $\sigma_{ij}$ is a homeomorphism. The 
pseudogroup $\Gamma$ on $T$ generated by all those maps is the 
\emph{transverse holonomy pseudogroup associated to $\A$}.

Given a point $x\in T$ we can consider the group $\Gamma_x$ of germs of 
elements of $\Gamma$ fixing $x$. Given other element $y\in T$ in the same leaf 
$L$, $\Gamma_x$ and $\Gamma_y$ are naturally isomorphic. We call this group 
the \emph{holonomy group} of $L$ and we write $\Gamma_L$. In fact, the 
holonomy group is a quotient of the fundamental group of $L$: Take a path 
$\gamma$ starting and ending at $x$ contained in the corresponding leaf. Then 
it can be covered by a finite family charts. Making the composition of 
the corresponding holonomy maps, we obtain a map $\sigma_\gamma$ depending on 
the homotopy class of the path. We have the \emph{holonomy representation} 
of the fundamental group of $L$. Details about laminations can be found in 
\cite{CC:F1}.


\section{From sequences of branched manifolds to laminations}
\label{sec:lim-match}

For us, a \emph{projective system} is family of spaces $S_k$ and continuous onto maps
$f_k:S_{k+1}\to S_k$ with $k\in\N$. Given such a system it is possible to 
define the \emph{projective} or \emph{inverse limit} as
\[
  S_\infty = \varprojlim(S_k,f_k)
            = \bigl\{\, (b_k) \in{\textstyle\prod_{k\geq0} S_k}
                                          \bigst f_k(b_{k+1}) = b_k \,\bigr\}.
\]
For each $S_k$, there is a natural map $\hat{f}_k:S_\infty\to S_k$, the 
restriction of the corresponding coordinate projection. Projective limits has 
a universal property: if there are maps $g_k:Y\to S_k$ such that 
$f_k\circ g_k = g_{k-1}$ for a space $Y$ and all $k$, then there exists a 
unique continuous map $g_\infty:Y\to S_\infty$ such that $\hat{f}_k\circ 
g_\infty=g_k$.

In general inverse limits might be empty, but if each factor $S_k$ is compact 
and nonempty, the projective limit will be compact and non empty. Moreover, if 
the factors are Hausdorff the inverse limit will be too \cite{dugundji:T}.
\medskip

Given an infinite set $\{a_i\}_{i\in\N}\subset\N$ we define the 
\emph{telescoping} (associated to the family $\{a_i\}$) of the projective 
system $\{S_k,f_k:S_k\to S_{k-1}\}$ as the projective system
\[
  S_{a_0}
    \xleftarrow{\;f_{a_1}\circ\cdots\circ f_{a_0+2}\circ f_{a_0+1}\;} S_{a_1}
      \xleftarrow{\;f_{a_2}\circ\cdots\circ f_{a_1+2}\circ f_{a_1+1}\;} S_{a_2}
        \cdots
\]
The telescoping is, roughly speaking, a less detailed projective system, as we
have dropped some levels, but with this process no information is lost. It is 
straightforward to show:

\begin{lemma}[Thm.~2.7 of Apx.~2 \cite{dugundji:T}]
  A projective system and any telescopic contraction of it have the same 
  inverse limit.
\end{lemma}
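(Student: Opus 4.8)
The plan is to exhibit mutually inverse continuous maps between the two limits using only the universal property stated above, so that no point-set topology is needed beyond it. Write $S_\infty=\varprojlim(S_k,f_k)$ for the limit of the original system, with projections $\hat f_k\colon S_\infty\to S_k$, and $T_\infty$ for the limit of its telescoping, with projections $\hat g_i\colon T_\infty\to S_{a_i}$; enumerate the family so that $a_0<a_1<a_2<\cdots$ and write $g_i=f_{a_{i-1}+1}\circ\cdots\circ f_{a_i}\colon S_{a_i}\to S_{a_{i-1}}$ for the telescoped bonding maps. The only structural input needed is that $\{a_i\}$, being an infinite subset of $\N$, is cofinal: every index $k$ satisfies $k\le a_i$ for some $i$.

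First I would build $\Phi\colon S_\infty\to T_\infty$. The projections $\hat f_{a_i}$ satisfy $g_i\circ\hat f_{a_i}=\hat f_{a_{i-1}}$, since $g_i$ is a composition of the $f_\ell$ and the coordinates of a point of $S_\infty$ are compatible under those maps. The universal property of $T_\infty$ then yields a unique continuous $\Phi$ with $\hat g_i\circ\Phi=\hat f_{a_i}$ for all $i$; concretely $\Phi$ discards the coordinates whose index is not one of the $a_i$.

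The reverse map is where cofinality enters. For each $k$ choose $i$ with $a_i\ge k$ and set
\[
  p_k=\bigl(f_{k+1}\circ\cdots\circ f_{a_i}\bigr)\circ\hat g_i\colon T_\infty\to S_k.
\]
The key step --- and the one place a short computation is unavoidable --- is checking that $p_k$ is independent of the chosen $i$: for another admissible index $j\ge i$ one has $f_{a_i+1}\circ\cdots\circ f_{a_j}=g_{i+1}\circ\cdots\circ g_j$, and the telescoping relation $(g_{i+1}\circ\cdots\circ g_j)\circ\hat g_j=\hat g_i$ collapses the two defining expressions to the same map. Granting this, $f_k\circ p_k=p_{k-1}$ holds for every $k$, so the universal property of $S_\infty$ produces a unique continuous $\Psi\colon T_\infty\to S_\infty$ with $\hat f_k\circ\Psi=p_k$.

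Finally I would verify that $\Psi$ and $\Phi$ are mutually inverse, again by uniqueness. For $\Psi\circ\Phi$ one computes $\hat f_k\circ(\Psi\circ\Phi)=p_k\circ\Phi=(f_{k+1}\circ\cdots\circ f_{a_i})\circ\hat f_{a_i}=\hat f_k$, so $\Psi\circ\Phi$ and $\id_{S_\infty}$ satisfy the same relations with the $\hat f_k$ and must coincide. For $\Phi\circ\Psi$, choosing the index $i$ itself when $k=a_i$ makes $p_{a_i}=\hat g_i$, whence $\hat g_i\circ(\Phi\circ\Psi)=\hat f_{a_i}\circ\Psi=p_{a_i}=\hat g_i$, and uniqueness gives $\Phi\circ\Psi=\id_{T_\infty}$. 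Continuity of both maps is built into the universal property, so $\Phi$ is the sought homeomorphism. The argument is entirely formal diagram chasing; the only genuine content, and the main thing to keep honest, is the independence of $p_k$ from the cofinal index, which is precisely where the telescoping relations are used.
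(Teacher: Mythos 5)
Your argument is correct. Note that the paper does not actually prove this lemma --- it declares it ``straightforward'' and defers to Dugundji (Thm.~2.7 of Apx.~2), so there is no in-text proof to compare against; your write-up supplies the missing details. What you give is the standard categorical version of Dugundji's argument: the one genuine ingredient is cofinality of $\{a_i\}$ in $\N$, which you correctly isolate, and the one computation that needs care is the independence of $p_k$ from the chosen cofinal index, which you verify via the telescoping relations. Dugundji's own treatment is the pointwise counterpart of the same idea (a thread over the cofinal subfamily extends uniquely to a full thread by pushing coordinates forward along the $f_\ell$); your universal-property formulation buys uniqueness statements for free and makes the two inclusions $\Psi\circ\Phi=\id$ and $\Phi\circ\Psi=\id$ into pure diagram chases. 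One cosmetic caution: the paper is internally inconsistent about whether $f_k$ maps $S_{k+1}\to S_k$ or $S_k\to S_{k-1}$, and its displayed composite for the telescoped bonding map is written in the opposite order to yours; your indexing convention is the self-consistent one, so no harm is done, but it is worth flagging if this text is to be spliced into the paper.
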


\begin{definition}
  A cellular $f:S\to S'$ map between branched manifolds is \emph{flattening} 
  if for each $b\in S$ there exists a (normal) neighborhood $U$ such that 
  $f(U)$ is a smooth disk of $S'$. A projective system if \emph{flattening} if 
  there is a telescopic contraction of it with all bond maps flattening.
\end{definition}

\begin{theorem}
  \label{thm:sys-lam}
  Fix a projective system $(S_k,f_k)$ where $S_k$ are branched $n$-manifolds 
  and $f_k$ cellular maps, both of class $C^r$. The inverse limit $S_\infty$ 
  of the system is a codimension zero lamination of dimension $p$ and class 
  $C^r$ if and only if the system is flattening.
\end{theorem}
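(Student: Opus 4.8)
The plan is to prove both implications by analysing, around a point $b=(b_k)\in S_\infty$, how the fibres of the projections $\hat f_k:S_\infty\to S_k$ sit over a smooth disk. For the sufficiency direction I first invoke the telescoping lemma to assume that every bond map $f_k$ is flattening. The goal is then the following local product lemma, which I regard as the crux of the theorem: if $P_0\subset S_0$ is a sufficiently small smooth disk with $b_0\in\interior P_0$, then
\[
  \hat f_0^{-1}(\interior P_0)\ \cong\ \interior P_0\times\mathcal T,
\]
under which $\hat f_0$ corresponds to the first projection and $\mathcal T$ is a compact, totally disconnected space. Granting this, the charts $\hat f_0^{-1}(\interior P_0)$ have exactly the foliated form of \eqref{ec:cambio_coor}: the disk factor carries the leafwise $C^r$ structure pulled back from $P_0$ (so plaques are smooth $n$-disks), the factor $\mathcal T$ is a $0$-dimensional local transversal, and on overlaps the change of coordinates is a leafwise $C^r$-diffeomorphism depending continuously on the transverse parameter, since each bond map is a $C^r$ submersion between branched manifolds of equal dimension and hence restricts to a diffeomorphism on every smooth disk. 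Covering $S_\infty$ by finitely many such charts, using compactness, exhibits $S_\infty$ as a codimension zero lamination of dimension $n$ and class $C^r$.

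To build the homeomorphism I use flattening to choose, for each $k$, a normal neighbourhood $U_k\ni b_k$ with $f_{k-1}(U_k)=P_{k-1}$ a smooth disk and $b_{k-1}\in\interior P_{k-1}$; since $f_{k-1}$ is a submersion, each smooth disk (sheet) of $U_k$ is carried diffeomorphically onto the whole of $P_{k-1}$. A point of $\hat f_0^{-1}(\interior P_0)$ near $b$ is then prescribed by its base point $x_0\in\interior P_0$ together with a compatible sequence of sheet choices, and $\mathcal T$ is precisely the inverse limit of the sheet-index sets, which are finite by local finiteness of the branching, hence a totally disconnected compact space. The subtle point, and the step I expect to be the main obstacle, is the local triviality of this description: although for each fixed $m$ the composite $f_0\circ\cdots\circ f_{m-1}$ has fibres whose cardinality jumps across the image of a branch point, the flattening of the next map $f_m$ reintroduces exactly the branching that was collapsed, so that a branch locus appearing at level $m$ is compensated at level $m+1$. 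I must show that these compensations make the fibre of the limit projection $\hat f_0$ locally constant over $\interior P_0$, equivalently that the sheet-index sets assemble into a genuine product rather than a branched family. This is where flattening at all levels, and not merely at one, is indispensable, and where the argument genuinely needs the full tower.

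For the necessity direction I argue through the same local picture. Assume $S_\infty$ is a codimension zero lamination, so each $b=(b_k)$ has a flow box $\cong\D^n\times X$ whose plaque $P$ through $b$ is an honest smooth $n$-disk. Pushing $P$ forward by $\hat f_k$ and using that $P$ is connected while the transverse factor $X$ is $0$-dimensional, I show that for all sufficiently large $k$ the image $\hat f_k(P)$ lies inside a single smooth disk of $S_k$: otherwise the plaque would meet two distinct sheets that the bond maps keep separate, forcing a transverse jump incompatible with $P$ being a single disk. Passing to the cofinal set of such levels yields a telescoping in which every bond map sends plaques onto smooth disks, that is, is flattening, and a compactness argument makes the choice of levels uniform in $b$. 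Conversely, were no telescoping flattening, branching would survive into the limit and produce a point of $S_\infty$ with no Euclidean neighbourhood inside its leaf, exactly the pathology of the figure-eight example $\mathbf S_2\wedge_{(1)}\mathbf S_2$, contradicting the manifold structure of the leaves. This closes both implications.
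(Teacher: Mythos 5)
Your overall architecture matches the paper's: telescope so that every bond map is flattening, build flow boxes as local products of a disk with a totally disconnected fibre, and for the converse extract a persistent pair of branches whose union survives into the limit --- your closing paragraph about the figure-eight pathology is essentially the paper's necessity argument (it exhibits a branched set $D^0\cup D^1$ inside $S_\infty$). The problem is that in the sufficiency direction you stop exactly where the work begins. You write that the ``subtle point\dots is the local triviality of this description'' and that you ``must show that these compensations make the fibre of the limit projection $\hat f_0$ locally constant over $\interior P_0$'' --- and then you do not show it. That claim is the entire content of the implication: without it you have only described the desired chart, not constructed it.

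The paper closes this gap with a case split. If some coordinate $b_{k_0}$ is a non-singular point of $S_{k_0}$, cellularity alone gives that $f_{k,k_0}^{-1}(D_{k_0})$ is a finite disjoint union of copies of the disk $D_{k_0}$ for every $k>k_0$, and the product structure passes to the limit with transverse factor $\hat f_{k_0}^{-1}(x_{k_0})$; no flattening is needed here. The delicate case is a thread with $b_k\in\sing S_k$ for all $k$. There the paper builds, level by level, a disjoint union of disks $X_k$ consisting of smooth local sections of the bond maps over the previously chosen sections, obtains an induced projective system $(X_k,g_k)$ together with a map $\varphi_\infty:\varprojlim X_k\to\varprojlim f_{k,0}^{-1}(D_0)$, and proves $\varphi_\infty$ injective: if $\varphi_k(y_k)=\varphi_k(y_k')$ for all $k$, the flattening of $f_k$ forces $y_{k-1}$ and $y_{k-1}'$ to lie in the same disk of the previous level, hence to coincide, and induction finishes. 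This is the precise mechanism by which flattening at all levels turns your ``sheet-index sets'' into a genuine product rather than a branched family; your proposal names the difficulty but supplies no substitute for this argument. A secondary point: you assert that each bond map is a submersion restricting to a diffeomorphism on every smooth disk, but the hypothesis is only that the maps are cellular, so the identification of the sheets over $P_{k-1}$ with copies of $P_{k-1}$ also has to come from the section construction rather than from submersivity.
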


\begin{proof}
  Let us start supposing that the system is flattening, we can assume the maps 
  $f_k$ are flattening immersions. And denote by $f_{k,k'}$ the maps
  \[
    f_{k,k'} = f_k \circ f_{k-1}\circ \cdots \circ f_{k'+1} : S_k\to S_{k'},
  \]
  where $k > k'$. We will construct a foliated atlas for 
  $S_\infty = \varprojlim (S_k,f_k)$. 
  \medskip
  
  Given a thread $(x_k)\in S_\infty$ there are two possibilities:
    a) $x_{k_0}\in S_{k_0}\smallsetminus\sing S_{k_0}$ for some $k_0\in\N$, or
    b) $x_k\in\sing S_k$ for all $k$.
    \medskip
  
  \noindent{}a) As $x_{k_0}\notin\sing S_{k_0}$ there is a normal 
    neighbourhood $D_{k_0}$ of $b_{k_0}$ diffeomorphic to an Euclidean closed 
    disk. As maps $f_k$ are cellular the space $f_{k,k_0}^{-1}(D_{k_0})$ is a 
    finite disjoint union of copies of $D_{k_0}$ (indeed one for each element 
    of $f_{k,k_0}^{-1}(x_{k_0})$), each of them being again a normal 
    neighbourhood, for each $k > k_0$. Define the set
    \begin{equation}
      \label{ec:U_D_0}
      U_{D_{k_0}} =
        \bigl\{\,(x'_k)\in S_\infty
            \bigst x'_k\in f_{k,k_0}^{-1}(D_{k_0})\,\bigr\},
    \end{equation}
    which can be also obtained as the inverse limit of the systems 
    $(f_{k,k_0}^{-1}(D_{k_0}),f_k)$. Each $f_{k,k_0}^{-1}(D_{k_0})$ is
    canonically homeomorphic to $D_{k_0}\times f_{k,k_0}^{-1}(x_{k_0})$. But 
    $D_{k_0}\times f_{k,k_0}^{-1}(x_{k_0})$ with maps $\id_{D_{k_0}}\times 
    f_k$ is an inverse system, and the canonical homeomorphism of each level 
    pass to the limit, therefore $U_{x_{k_0}}$ is homeomorphic to 
    $\varprojlim(D_{k_0}\times f_{k,k_0}^{-1}(x_{k_0}), \id_{D_{k_0}}\times 
    f_k) = D_{k_0}\times \hat{f}^{-1}_{k_0}(x_{k_0})$. Finally, notice that
    $\hat{f}^{-1}_{k_0}(x_{k_0})$ is a closed and totally disconnected set. 
    \medskip
    
  \noindent{}b) In this case, the construction is similar, but now normal 
    neighbourhoods are no longer disks. Given a normal neighbourhood $D_0$ of 
    $x_0$ we define the closed set $U_{D_0}$ as in \eqref{ec:U_D_0}. We should 
    use flattening condition to get a product structure. Consider local 
    coordinates for $D_0$, that is, a finite disjoint union $X_0$ of disks, 
    and a map $\varphi_0:X_0\to D_0$. We can assume that each of them is a 
    smooth section for $f_0$. Now, for each point of $f^{-1}_0(x_0)$ we can 
    give local coordinates being smooth sections of $f_1$. All those local 
    sections form $X_1$ a disjoint union of disks, and we also have the 
    coordinate map $\varphi_1:X_1\to f^{-1}_0(D_0)$. There is an obvious map 
    $g_0$ from $X_1$ to $X_0$ defined by $f_0$. Continuing in this way we get 
    a projective system $(X_k, g_k)$ and a system of maps $\{\varphi_k\}$ to 
    the system $(f^{-1}_{k,0}(D_0),f_k)$. It is straightforward to show that 
    $\varphi_{k-1}\circ g_k = f_k\circ\varphi_k$. Therefore, 
    there is a continuous map 
    $\varphi_\infty: \varprojlim X_k\to \varprojlim f^{-1}_{k,0}(D_0)$. It is 
    obvious $\varphi_\infty$ is onto, so it is enough that see that it is 
    injective.
    
    Fix $(y_k)$ and $(y_k')\in\varprojlim X_k$ such that 
    $\varphi_\infty(y_k)  = \varphi_\infty(y'_k)$. Then, for any $k$
    $\varphi_k(y_k) = \varphi_k(y'_k)$. But $f_k$ is flattening, therefore 
    $y_{k-1}$ and $y'_{k-1}$ belongs to the same disk in $X_k$. Thus 
    $y_{k-1} = y'_{k-1}$. By induction we get $(y_k)=(y'_k)$.
    \bigskip
    
  The change of coordinates of all those maps are like in \eqref{ec:cambio_coor} 
  as the transversals are totally disconnected.
  \medskip

  Now, suppose flattening condition is not fulfilled, even after any 
  telescopic contraction. So there exists a thread $(x_k)\in S_\infty$ such 
  that for each point $x_k\in S_k$ there are two smooth sections $D_k^0$ and 
  $D_k^1$ of $f_k$ such that $f_k:D_{k+1}^i\to D_k^i$ is a diffeomorphism for 
  $i=0,1$. In particular, $f_k$ restricted to the union $D_{k+1}^0\cup 
  D_{k+1}^1$ is a diffeomorphism of branched manifolds. Therefore the inverse 
  limit is diffeomorphic to $D_{k+1}^0\cup D_{k+1}^1$ and it is contained in 
  $S_\infty$. Hence it cannot be a lamination.
\end{proof}

\section{From laminations to sequences of branched manifolds}
\label{sec:match-lim}

\subsection{Box decompositions}
\label{subsec:box_dec}

\begin{definition}
  \label{def:box_desco}
  A family of compact flow boxes
  $\B = \{\varphi_i : B_i \to  D_i \times C_i\}_{i=1}^m$
  is said to be a \emph{box decomposition of $(M,\cL)$} if
  \begin{enumerate}

    \item the family $\B$ covers $M$;

    \item each transversal $C_i$ is a clopen set of $X$;

    \item if $i\neq j$, the intersection of $B_i$ and $B_j$ agrees
      with the intersection of the vertical boundaries
      $\partial_\pitchfork B_i = \varphi_i(\partial D_i \times C_i)$ and
      $\partial_\pitchfork B_j = \varphi_j(\partial D_j \times C_j)$;

    \item each plaque of $B_i$ meets at most one plaque of $B_j$;

    \item the changes of coordinates are given by: 
      \begin{equation}
        \label{ec:cambio_coor_cajas}
        \varphi_i\circ\varphi_j^{-1}(x,y)
              = \bigl(\varphi_{ij}(x),\sigma_{ij}(y)\bigr).
      \end{equation}

  \end{enumerate}
  If the plaques of $\B$ are $p$-simplexes, each pair of plaques meets in a 
  common face. In this case, we may suppose that the maps $\varphi_{ij}$ are 
  linear and we will say that $\B$ is a \emph{simplicial box decomposition} 
  (or simply \emph{simplicial decomposition}) of $(M,\cL)$.
\end{definition}

\begin{theorem}[\cite{ALM:TCLIL}]
  \label{thm:exists_sbd}
  Any codimension zero compact $C^1$ lamination $(M,\cL)$ admits a
  simplicial box decomposition $\B$.
\end{theorem}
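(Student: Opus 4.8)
The plan is to start from a finite good foliated atlas and refine it, first transversally and then leafwise, until the plaques become simplices meeting in common faces. First I would invoke compactness to fix a finite good atlas $\A = \{\varphi_i : U_i \to P_i \times X_i\}$ with each $P_i$ a disk and each $X_i$ a clopen subset of the $0$-dimensional complete transversal $T = \bigsqcup_i X_i$. The associated holonomy pseudogroup $\Gamma$ is generated by the finitely many holonomy transformations $\sigma_{ij}$. Because $T$ is locally compact, Polish and totally disconnected, it admits arbitrarily fine finite clopen partitions; using continuity of the $\sigma_{ij}$ together with condition (2) of a good atlas, I would refine each $X_i$ into finitely many clopen pieces so that the \emph{combinatorial type} of the plaque configuration is constant on each piece, that is, for a plaque over a point $y$ of a clopen piece, the collection of plaques of the other boxes that it meets, and the way they overlap, does not depend on $y$. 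This freezes the transverse structure.

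Over each such clopen piece the lamination then looks like a product $W \times (\text{clopen set})$, where $W$ is a fixed compact piece of a leaf obtained by gluing finitely many plaques along their overlaps. The task becomes the purely leafwise problem of choosing, on each model piece $W$, a triangulation refining the plaque decomposition so that whenever two plaques overlap they meet in a common simplicial face. I would produce this by taking the cell decomposition of $W$ induced by the overlapping plaques (its closed cells being the intersections of plaque interiors together with their boundaries), passing to a common subdivision, and applying a barycentric-type simplicial subdivision so that every cell becomes a simplex and cells meet in faces. For the $C^1$ requirement I would use the leafwise triangulability of $C^1$ manifolds, carried out once on each $W$ and then transported across the transverse factor by the product structure; since the changes of coordinate are transversally locally constant, the induced maps $\varphi_{ij}$ can be taken linear on each simplex, which is condition (5).

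Finally I would reassemble the transported simplices into flow boxes $B_i \cong D_i \times C_i$ with $D_i$ a simplex and $C_i$ clopen, and verify conditions (1)–(4) of Definition~\ref{def:box_desco}: the boxes cover $M$ by construction, the $C_i$ are clopen by the refinement, each plaque meets at most one plaque of any other box as inherited from the good atlas, and the intersections occur exactly along the vertical boundaries because neighbouring simplices share faces.

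The hard part is the transverse uniformity in the second paragraph: guaranteeing that the leafwise triangulation can be chosen so that overlapping plaques of distinct boxes meet in common faces \emph{simultaneously across all leaves}, including those carrying holonomy, rather than one leaf at a time. The $0$-dimensionality of $T$ is exactly what resolves this, since it lets us refine to clopen pieces on which the entire combinatorial pattern of overlaps is frozen; the compatibility problem then reduces on each piece to a single finite combinatorial matching on $W$, which the product structure spreads consistently to every leaf passing through the piece.
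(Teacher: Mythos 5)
The paper does not actually prove this statement: Theorem~\ref{thm:exists_sbd} is quoted from \cite{ALM:TCLIL}, and the text only remarks that the proof is long and follows the approach to the classical triangulation theorem inaugurated by Weil \cite{weil:STR}, i.e.\ covering by leafwise geodesically convex flow boxes and building the simplices canonically from the combinatorics of the cover. Your overall reduction --- refine the transversal into clopen pieces on which the combinatorial type of the plaque overlaps is locally constant, then solve a leafwise triangulation problem once per model piece and spread it across the transverse factor --- is the right general strategy and is consistent in spirit with that reference.

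The gap is in the leafwise step. You propose to take ``the cell decomposition of $W$ induced by the overlapping plaques'' and ``pass to a common subdivision''. In the $C^1$ category this is precisely the hard point: the plaque boundaries are merely $C^1$ hypersurfaces in the leaf, their mutual intersections need not form a cell complex at all, let alone one admitting a simplicial common refinement, and arranging this requires a general-position perturbation in the spirit of the Cairns--Whitehead $C^1$ triangulation theorem, carried out so that the perturbation is itself transversally locally constant. Moreover, a single plaque $P$ of a box $U_i$ occurs in several different configurations $W$ (one for each plaque it meets), so independently chosen subdivisions of the various $W$'s will in general disagree on $P$; you need either a canonical construction with no choices to reconcile --- which is exactly what the Weil-style geodesically convex cover provides, since the simplices are determined by the cover itself --- or a global induction over the skeleta performed transversally locally constantly. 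Your closing paragraph identifies a transverse uniformity issue but locates it in leaves with holonomy; the self-overlap of configurations is already present on a single leaf and is not resolved by freezing the transverse combinatorics.
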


The proof of the Theorem is long, but the main points of it are the same of the
Classical Triangulation Theorem with the approach inaugurated by
A.~Weil \cite{weil:STR}.


\subsection{Patterns and boxes}

For now on, we fix a simplicial box decomposition 
$\K=\{\varphi_\Delta : B_\Delta \to  \Delta \times X_\Delta\}$, where $\Delta$ 
runs over a finite set of $n$-simplexes $\mathcal{P}$. By analogy, we will 
call those elements (proto)tiles. Each transversal $X_\Delta$ can be 
identified withthe set of centers of the triangles of the corresponding box 
$B_\Delta$, or in other words, each simplex $\Delta$ is pointed at its center. 
We denote by $X$ the complete transversal associated to $\mathcal{K}$, that 
is, the disjoint union $\bigsqcup_\Delta X_\Delta$.

\begin{definition}
  A \emph{pattern} $P$ is a finite union (face by face) of copies of elements 
  of $\mathcal{P}$. If we fix a base point $p$ (chosen from the ones of the 
  tiles forming $P$) we talk about a \emph{pointed pattern $(P,p)$}. The base 
  point is dropped from notation if it is clearly understood.
\end{definition}

In general, patterns are not subsets of leaves of $\cL$, not even subsets of 
$M$. The natural place for patterns is the \emph{holonomy coverings of 
leaves}, that is, the covering associated to the kernel of the holonomy 
representation of the leaf.

\begin{definition}
  Let $\pi: \widehat{L}_x \to L_x$ denotes the holonomy covering of the leaf 
  $L_x$ through $x\in X$. A pointed pattern \emph{$(P,p)$ is around $x\in X$} 
  if there is a copy of the pattern $P\subset\widehat{L}_x$ such that 
  $\pi(p)=x$. Call 
  \[
    X_{(P,p)} = \bigl\{\, x\in X \bigst \text{$(P,p)$ is around $x$} \,\bigr\}.
  \]
  to the set of all those  points.
\end{definition}

\begin{lemma}
  The set $X_P$ is clopen.
\end{lemma}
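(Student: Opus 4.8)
The plan is to show that $X_{(P,p)}$ is both open and closed by exploiting the fact that the transversal $X$ is totally disconnected, so that clopenness reduces to an entirely local verification. The key observation is that whether a pointed pattern $(P,p)$ sits around a point $x$ is detected by finitely many holonomy transformations of the pseudogroup $\Gamma$ associated to the decomposition $\K$. Concretely, the pattern $P$ is a finite face-by-face union of tiles, so a realization of $P$ around $x$ in the holonomy covering $\widehat{L}_x$ amounts to choosing, for each tile of $P$ adjacent to the base tile, the appropriate holonomy transition between the corresponding transversals, and checking that these transitions are simultaneously defined at $x$ and yield a consistent picture.

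First I would fix, for the base point $p$, the tile $\Delta_0$ of $P$ containing it, so $x$ must lie in the transversal $X_{\Delta_0}$. Then I would enumerate the tiles $\Delta_1,\dots,\Delta_r$ of $P$ together with the sequence of face-adjacencies connecting each $\Delta_j$ back to $\Delta_0$ within $P$. Each such adjacency, when pulled into the lamination, corresponds to a holonomy transformation $\sigma$ of $\Gamma$ whose domain is an open (indeed clopen, since $X$ is $0$-dimensional) subset of the relevant $X_\Delta$. The statement ``$(P,p)$ is around $x$'' then translates to: $x$ belongs to the domain of the composite holonomy transformation realizing each branch of $P$, and these transformations patch together to produce an embedded copy of $P$ in $\widehat{L}_x$ with $\pi(p)=x$. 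I would argue that this condition is exactly an intersection of finitely many conditions of the form ``$x\in\dom\sigma$'' for holonomy maps $\sigma$, possibly together with finitely many conditions ensuring the patched faces genuinely coincide, each of which cuts out an open subset of $X_{\Delta_0}$.

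Next I would argue openness: since each holonomy transformation of $\Gamma$ has open domain, and $X_{(P,p)}$ is a finite intersection of preimages of open domains under holonomy maps (themselves homeomorphisms onto open sets), $X_{(P,p)}$ is open. For closedness, the cleanest route is to show that the complement is open by the symmetric argument, using that the set of $x$ admitting a given local pattern is also locally constant: because the box decomposition is finite and each plaque of $B_i$ meets at most one plaque of $B_j$ (condition (4) of Definition~\ref{def:box_desco}), the combinatorial type of the patch of tiles around $x$ is locally determined. Thus for each $x\in X_{(P,p)}$ there is a clopen neighbourhood on which the same pattern appears, giving openness of $X_{(P,p)}$; and for each $x\notin X_{(P,p)}$, the failure is witnessed either by a holonomy map being undefined (an open condition, since domains are open) or by a face-mismatch that persists on a neighbourhood (again open by local constancy), giving openness of the complement.

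The main obstacle I anticipate is making precise the passage between the abstract pattern $P$ living in the holonomy covering and the concrete holonomy transformations on the transversal, particularly keeping track of holonomy: two apparently different realizations of $P$ could be identified, or a face-closing loop in $P$ could force a nontrivial holonomy constraint that must be checked as an equation between germs rather than a mere domain condition. Handling these closing conditions carefully---showing that ``the holonomy around a closed loop in $P$ fixes $x$'' is itself locally constant in $x$, so that it does not spoil clopenness---is the delicate point. I expect this to follow from the fact that the germ of a holonomy transformation at a point is locally constant on a totally disconnected transversal, but verifying it cleanly is where the real work lies.
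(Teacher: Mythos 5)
Your overall strategy matches the paper's: reduce the condition ``$(P,p)$ is around $x$'' to membership of $x$ in the domains of finitely many composite holonomy transformations $\gamma_{p,\Delta}=\sigma_{\Delta_{k-1},\Delta_k}\circ\cdots\circ\sigma_{\Delta_0,\Delta_1}$, one for each tile $\Delta$ of $P$, so that $X_P=\bigcap_{\Delta\in P}\dom\gamma_{p,\Delta}$ is a finite intersection. The openness half of your argument is fine. But the closedness half has a genuine gap, in two places. First, your parenthetical claim that each domain is ``open (indeed clopen, since $X$ is $0$-dimensional)'' is not a valid inference: zero-dimensionality gives a \emph{basis} of clopen sets, not that every open subset is closed (open subsets of the Cantor set need not be clopen). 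Second, and more seriously, when you argue that the complement of $X_{(P,p)}$ is open you say the failure ``a holonomy map being undefined'' is ``an open condition, since domains are open'' --- this is exactly backwards: openness of the domain makes \emph{definedness} an open condition and undefinedness a \emph{closed} one. The fact you actually need is that the domains of the generating holonomy maps $\sigma_{\Delta,\Delta'}$ are \emph{closed}, and this comes from the compactness built into the box decomposition: $\dom\sigma_{\Delta,\Delta'}$ is the image under the transverse projection of the compact set $B_\Delta\cap B_{\Delta'}$, hence compact, hence closed. This is what the paper means when it says the domains are clopen ``by definition of the holonomy transformations'' of a (good, compact) box decomposition; your ``local constancy of the combinatorial type'' assertion is equivalent to this but is never actually proved.

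On the point you flag as delicate --- closing conditions coming from loops in $P$ --- the paper's resolution is worth internalizing: the pattern lives in the \emph{holonomy covering} $\widehat{L}_x$, so any two tile-paths in $P$ from the base tile $\Delta_0$ to a given tile $\Delta$ are homotopic relative to endpoints there, and therefore induce the same holonomy transformation $\gamma_{p,\Delta}$. Consequently there are no extra germ equations to impose: $X_P$ is exactly the intersection of the domains, and no separate argument about local constancy of germs is needed. With the compactness argument for closedness of the domains supplied, and the loop issue dispatched as above, your proof closes up and coincides with the paper's.
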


\begin{proof}
  Fix $x\in X_P$. For each tile $\Delta$ 
  of $P$ we can fix a path of tiles $\{\Delta_0,\Delta_1,\ldots,\Delta_k\}$, 
  where $\Delta_0$ is the tile containing $p$ and $\Delta_k=\Delta$. The 
  projection of the path defines a path of plaques on $M$, and the associated 
  holonomy map is given by
  \[
    \gamma_{p,\Delta} = 
        \sigma_{\Delta_{k-1},\Delta_k}\circ
                \cdots\circ\sigma_{\Delta_1,\Delta_2}
                                \circ\sigma_{\Delta_0,\Delta_1}.
  \]
  The map above does not depend on the path itself but on $\Delta$ as the 
  different paths are homotopic relatively to its ends in the holonomy 
  covering. It is obvious that 
  \[
    X_P = \bigcap_{\Delta\in P} \dom\gamma_{p,\Delta}
  \]
  By definition of the holonomy transformations 
  $\sigma_{\Delta,\Delta'}$, their domains are clopen so 
  $\dom\gamma_{p,\Delta}$ are also clopen. Finally, there are a finite 
  number of simplexes, hence $X$ is clopen.
\end{proof}

Now, given a pattern $P$ there is a natural map $P\times X_P\to M$ given as 
the gluing of the embeddings $\Delta\times X_P\subset \Delta\times X_\Delta\to 
M$. Those maps shows the following result:

\begin{lemma}
  \label{lemma:singular-box}
  Given a pointed pattern $(P,p)$, there is a singular box
  \[
    \psi_P: P\times X_P \to M,
  \]
  that is, a foliated map such that
  \begin{enumerate}
    
    \item is a homeomorphism restricted to each transversal $\{\ast\}\times 
      X_P$ and

    \item is a local homeomorphism restricted to each plaque 
      $P\times\{\ast\}$.

  \end{enumerate}
\end{lemma}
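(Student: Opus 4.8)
The plan is to construct the map $\psi_P$ directly by gluing together the embeddings coming from the individual tiles of $P$, and then verify the two stated properties. First I would recall that each tile $\Delta$ of the pattern $P$ comes with its associated flow box $B_\Delta = \varphi_\Delta^{-1}(\Delta\times X_\Delta)$, and that by definition $X_P\subseteq X_\Delta$ for every such $\Delta$ (more precisely, $X_P$ sits inside $X_\Delta$ after applying the holonomy map $\gamma_{p,\Delta}$ identified in the previous lemma). For a fixed base tile $\Delta_0$ containing $p$, the set $X_P$ is a clopen subset of $X_{\Delta_0}$, and for each other tile $\Delta$ the holonomy transformation $\gamma_{p,\Delta}$ carries $X_P$ homeomorphically onto a clopen subset of $X_\Delta$. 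I would therefore define $\psi_P$ tile by tile: on $\Delta\times X_P$ set
\[
  \psi_P(z,x) = \varphi_\Delta^{-1}\bigl(z,\gamma_{p,\Delta}(x)\bigr),
\]
using the coordinate chart of the flow box $B_\Delta$.

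The second step is to check that these tile-wise definitions glue consistently along shared faces, so that the resulting map on all of $P\times X_P$ is well defined. When two tiles $\Delta$ and $\Delta'$ of $P$ meet in a common face $F$, a point $(z,x)$ with $z\in F$ is described by both local formulas, and I must check they agree. This reduces to the compatibility of the holonomy transformations: the change-of-coordinates formula \eqref{ec:cambio_coor_cajas} together with the cocycle relation among the $\sigma_{ij}$ guarantees that $\gamma_{p,\Delta'} = \sigma_{\Delta,\Delta'}\circ\gamma_{p,\Delta}$ along the relevant overlap, which is exactly what forces the two expressions for $\psi_P(z,x)$ to coincide over the shared face. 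This is precisely the reason the holonomy maps were shown to be path-independent in the holonomy covering, so the gluing is forced rather than assumed.

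Finally I would establish the two asserted properties. For (1), the restriction of $\psi_P$ to a transversal $\{\ast\}\times X_P$, say with $\ast$ lying in the interior of the base tile $\Delta_0$, is just $x\mapsto\varphi_{\Delta_0}^{-1}(\ast,x)$, which is a homeomorphism onto a local transversal of $B_{\Delta_0}$ since $\varphi_{\Delta_0}$ is a chart and $X_P$ is clopen in $X_{\Delta_0}$; the same holds over any tile. For (2), the restriction to a plaque $P\times\{x\}$ is the gluing of the plaque embeddings $\Delta\times\{\gamma_{p,\Delta}(x)\}\hookrightarrow M$, each of which is an embedding into a leaf, so the composite is a local homeomorphism of $P$ into the holonomy covering $\widehat{L}_x$ (and hence into $M$ after projection). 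I expect the main obstacle to be the gluing step: one must verify carefully that the plaque embeddings and transverse identifications are mutually compatible across all pairs of adjacent tiles simultaneously, which is where the box-decomposition axiom (4) (each plaque meets at most one plaque of another box) and the face-by-face matching of simplices are essential. The map is in general only a local homeomorphism, not a global embedding, on each plaque, precisely because distinct tiles of $P$ may map to the same plaque of $M$ when projected down; this is why the natural codomain is $\widehat{L}_x$ rather than $M$ itself, and the statement correctly asserts only the local property.
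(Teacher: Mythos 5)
Your construction is exactly the one the paper intends: the paper disposes of this lemma in a single sentence, defining $\psi_P$ as the gluing of the per-tile embeddings $\Delta\times X_P\subset\Delta\times X_\Delta\to M$, and your proposal simply fills in the details (the holonomy maps $\gamma_{p,\Delta}$, the cocycle compatibility along shared faces, and the verification of the two properties) of that same gluing. The proposal is correct and follows essentially the same route, only more carefully.
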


\begin{definition}
  \label{def:decoration}
  A \emph{tile decoration} of $\K$ is a disjoint clopen cover of $X_\K$.
\end{definition}

It is clear the associated set $X_P$ for a decorated pattern is also clopen, 
as if is enough to restrict the domains and images of the holonomy 
transformations to the clopen sets of the decoration. Hence, 
Lemma~\ref{lemma:singular-box} also holds for decorated patterns.

\subsection{Codimension zero laminations are inverse limits}

The aim of this section is to prove the Theorem:

\begin{theorem}
  \label{thm:lam-sys}
  Any codimension zero lamination $(M,\cL)$ is homeomorphic to an inverse 
  limit $\varprojlim(S_k,f_k)$ of branched manifolds $S_k$ and submersions 
  $f_k: S_k\to S_{k-1}$.
\end{theorem}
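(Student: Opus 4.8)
The plan is to realise $M$ as its own ``tiling space'' by manufacturing branched manifolds out of the decorated patterns supplied by the box decomposition. Fix a simplicial box decomposition $\K$ by Theorem~\ref{thm:exists_sbd}, with complete transversal $X=\bigsqcup_\Delta X_\Delta$. Since $X$ is a locally compact, totally disconnected Polish space it carries a countable basis of clopen sets, so I can choose a nested sequence of tile decorations $\De_0\prec\De_1\prec\cdots$ in the sense of Definition~\ref{def:decoration}, each refining the previous one, whose union generates the topology of $X$. Combining the decoration $\De_k$ with a combinatorial radius $r_k\to\infty$, I attach to every $x\in X$ its level-$k$ decorated pointed pattern $(P_k(x),p)$: the union of all tiles of $\K$ within combinatorial distance $r_k$ of the tile carrying $x$, each labelled by the block of $\De_k$ containing its centre. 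Two points share the same level-$k$ pattern exactly on the clopen set $X_{(P_k(x),p)}$, so at each level these sets partition $X$ into finitely many clopen pieces, and because the $\De_k$ generate the topology these partitions refine and eventually separate any two distinct transverse points.

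Next I build the approximants. Let $S_k$ be the branched manifold obtained by taking one copy of the central simplex $\Delta$ for each level-$k$ decorated pattern $(P,p)$ and gluing the faces of two such decorated simplices whenever the corresponding adjacency is actually realised in a leaf of $M$ (this adjacency data is read off from the holonomy transformations of $\K$). Around a face the finitely many decorated simplices that are glued there assemble exactly into a local branched model, the gluing tree recording which copies meet, so $S_k$ is a branched $n$-manifold of class $C^r$. Forgetting the outermost collar of radius $r_{k-1}$ and coarsening $\De_k$ to $\De_{k-1}$ sends each level-$k$ decorated simplex to a level-$(k-1)$ one by the identity on the underlying simplex; this defines a cellular map $f_k\colon S_k\to S_{k-1}$ which is a local diffeomorphism on each smooth disk, hence a submersion, and which is flattening in the sense of Theorem~\ref{thm:sys-lam} since every decorated simplex maps onto a single smooth disk.

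I then define compatible maps $g_k\colon M\to S_k$. A point $m\in M$ lies over a unique transverse point $x\in X_\Delta$, determining the level-$k$ decorated pattern of $x$ and hence a decorated copy of $\Delta$ in $S_k$; send $m$ to the point of that copy at the simplex position of $m$ inside its plaque. Using the singular boxes $\psi_P$ of Lemma~\ref{lemma:singular-box} one checks this is well defined across box overlaps---on a shared vertical boundary the two prescriptions land on faces that were glued together in $S_k$---and continuous, the clopen decorations guaranteeing local constancy of the labelling transverse to the plaques. By construction $f_k\circ g_k=g_{k-1}$, so the universal property of the inverse limit yields a continuous map $g_\infty\colon M\to\varprojlim(S_k,f_k)$ with $\hat f_k\circ g_\infty=g_k$.

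Finally I argue $g_\infty$ is a homeomorphism. It is injective because the simplex coordinate separates points lying in a common plaque, while two points over distinct $x\ne x'$ acquire different level-$k$ pattern types for large $k$, since the $\De_k$ generate the topology of $X$. For surjectivity, a thread in $\varprojlim S_k$ selects a coherent family of decorated patterns of radii $r_k\to\infty$; the associated clopen sets $X_{P_k}$ form a decreasing sequence of nonempty sets whose intersection, by compactness and the fact that the $\De_k$ generate the topology, is a single transverse point $x$, and the simplex coordinate locates the corresponding $m\in M$ with $g_\infty(m)$ equal to the given thread. As $M$ is compact Hausdorff and $\varprojlim S_k$ is Hausdorff, a continuous bijection is a homeomorphism. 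I expect the genuine difficulty to be this last coherence step: one must verify that the face gluings in $S_k$ encode \emph{exactly} the adjacencies occurring in $M$, so that no ``phantom'' thread---a formally consistent but unrealised sequence of patterns---appears in the limit, and that the nested patterns reassemble inside an actual leaf of $M$ rather than in some larger abstract completion.
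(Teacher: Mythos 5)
Your proposal is correct and follows essentially the same route as the paper: a simplicial box decomposition, refining clopen decorations that generate the transverse topology, growing decorated patterns with their clopen sets $X_{(P,p)}$, branched-manifold approximants built from those patterns, and the compact-to-Hausdorff continuous bijection at the end. The only real difference is that the paper defines $S_k$ directly as a quotient $M/\!\!\sim_k$ (collapsing the transversals of the singular pattern boxes, with the patterns taken to be iterated stars $\star^k\Delta$ in the holonomy covers) rather than as an abstract gluing of one simplex per pattern type; this makes each $q_k$ surjective by construction and disposes of the ``phantom thread'' worry you flag at the end by the standard nested nonempty compact fibers argument, which your version also supports since you only include realized pattern types and realized adjacencies.
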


\begin{proof}
  For now on let $\K$ denote a simplicial box decomposition as the one of
  Theorem~\ref{thm:exists_sbd}. The axis $X_\K$ of $\K$ is a totally 
  disconnected compact metrizable space, therefore we can choose a countable 
  basis $\Bas$ of the topology of $X_\K$ such that
  \begin{enumerate}
    
    \item each set $B\in\Bas$ is clopen and;
    
    \item the basis is written as a disjoint union of \emph{floors}
      $\Bas = \bigsqcup \Bas_k$ such that
      \begin{enumerate}
        
        \item for each $B\in \Bas_{k+1}$ there exists $B'\in\Bas_k$ such that 
          $B\subsetneq B'$ and;
          
        \item $\Bas_k$ is an open cover of $X_\K$.
        
      \end{enumerate}
  \end{enumerate}
  For each $p\in X_\K$, let $\widetilde{L}_p$ be the holonomy covering of 
  $L_p$ the leaf through $p$. Fix a lift $\tilde{p}$ of $p$. Let $\Delta_p$ 
  denote the simplex containing $p$, and $\Delta_{\tilde{p}}$ the lift of it 
  to $\widetilde{L}_p$. In other words, $\Delta_{\tilde{p}}$ is the simplex 
  whose barycenter is $\tilde{p}$.
  \bigskip
  
  \noindent\emph{The first branched manifold $S_1$.} Fix the decoration of 
  tiles $\De_1 = \Bas_1$. From now until next step all patters will be decorated 
  by $\De_1$. Consider the pattern $P^1_p=\star(\Delta_{\tilde{p}})$. It is 
  clear $P^1_p$ does not depend on the election of $\tilde{p}$, as deck 
  transformations relate all possible elections. Associated to $P^1_p$ there 
  is a clopen set $X_{P_p^1}$. By definition it is obvious that two sets 
  $X_{P_p^1}$ and $X_{P_{p'}^1}$ agree or are disjoint. As $X_\K$ is compact, 
  there are finitely many of them, or equivalently, there are finitely many 
  decorated patterns $\{P_1^1, P_2^1,\ldots,P_{\ell_1}^1\}$. We have then the 
  associated sets $X_{P_i^1}$ and singular boxes 
  $\varphi_i^1: P_i\times X_{P_i^1}\to M$. We define an equivalence relation 
  $\sim_1$ on $M$ generated by 
  \[
    x \sim_1 y
      \iff
        \text{$\exists\varphi_i^1$ s.t. 
          $pr_1\circ{\varphi_i^1}^{-1}(x)
                          = pr_1\circ{\varphi_i^1}^{-1}(y)$.
        }
  \]
  That relations is just the collapsing the transversals of the singular boxes 
  on $M$. Set $S_1 = M/\!\!\sim_1$ and $q_1$ the quotient map.
  \smallskip

  The space $S_1$ has a natural branched manifold structure. It is obvious 
  $S_1$ is covered by the images of the starts of points 
  $\star x = \bigcup_{\Delta\ni x} \Delta$, with $x\in M$. Moreover, by the 
  compactness of $S_1$ (there are just finitely many patterns on $M$) finitely 
  many of them are enough, and can be chosen to be vertices of the 
  triangulation. Sadly, those disks might not map to disks on $S_1$, as some 
  of their simplexes may meet in other ways in $M$, so the quotient is not a 
  disk (see Figure~\ref{fig:starDelta}a). Define the disk $D_x$ as
  \[
    D_x = \bigl\{\, y \in \star x 
                        \bigst \mathrm{bar}_x(y) \geq \nicefrac23 \,\bigr\},
  \]
  where $\mathrm{bar}_x(y)$ denotes the barycentric coordinate of $y$ 
  associated to $x$ on the corresponding simplex (see 
  Figure~\ref{fig:starDelta}a). Those disks are embedded as disks on $S_1$. 
  Now, given $s\in S_1$ the image of a vertex, define the open set
  \[
    U_s = \bigcup_{q_1(x\in M) = s} q_1(D_x).
  \]
  It is straightforward to see the $U_s$ open sets with the disks $D_x$ form a 
  branched manifold atlas for $S_1$.
  \bigskip

  \noindent\emph{The branched manifolds $S_k$.}
  Now, by induction suppose we have the partition by clopen sets 
  $\{X_{P_i^{k-1}}\}_{i=1}^{\ell_{k-1}}$ and $S_{k-1}$. We can define the 
  decoration
  \[
    \De_k = \bigl\{\,
                      X_{P_i^{k-1}} \cap B \bigst
                      \text{$1 \leq i\leq \ell_{k-1}$ and $B\in\B_k$} 
            \,\bigr\}.
  \]
  So now, for each $p\in P$, consider the pattern decorated by $\De_k$ 
  \[
    P^k_p=\star P^{k-1}_p = 
                  \star \stackrel{k}{\cdots}\star \Delta_{\tilde{p}}, 
  \]
  using the same notation 
  above. Proceeding as before, we obtain finite partition by clopen sets 
  $\{X_{P_i^k}\}_{i=1}^{\ell_k}$ and the branched manifold $S_n$ and the 
  quotient map $q_k$. 
  \bigskip 

  \noindent\emph{The branched manifold system.} By construction given $P_i^k$ 
  there exists $P_j^{k+1}$ such that $X_{P_j^{k+1}}\subset X_{P_i^k}$. It is 
  clear, $P_j^{k+1}$ is just the star of $P_i^k$ with a finer decoration. Even 
  more, the natural maps embedding the associated boxes agree. This implies 
  there is a natural map $f_{k+1}:S_{k+1}\to S_k$ such that 
  \[
    \begin{tikzcd}
      M           \ar{dr}{q_3}
                  \ar[bend left=10]{drr}{q_2}
                  \ar[bend left=15]{drrr}{q_1} \\
                  \ar[-,draw=none]{r}%
                        [near end, description]{\mbox{\normalsize$\cdots$}}
          & S_3   \ar{r}[below]{f_3}
          & S_2   \ar{r}[below]{f_2}
          & S_1
    \end{tikzcd}
  \]
  is commutative. Therefore, there is a continuous map 
  $q_\infty:M\to S_\infty$.
  \bigskip
  
  \noindent\emph{Finishing the proof. The map $q_\infty$ is a homeomorphism.}
  As each quotient map $q_n$ is onto, the map $q_\infty$ has dense image. 
  But $M$ is compact and $S_\infty$ is Hausdorff, therefore $q_\infty$ is 
  onto. On the other hand, $q_\infty$ is into. Fix $x$ and $y \in M$ such that 
  $q_\infty(x)=q_\infty(y)$. This implies those points share decorated 
  patterns of any size. Therefore $x = y$. As $q_\infty$ is continuous 
  bijection from a compactum to a Hausdorff space, it is a homeomorphism.
\end{proof}

\begin{figure}%
  \begin{tikzpicture}[
    scale=1.2,join=round, line cap=round,commutative diagrams/every diagram, 
    baseline=(current bounding box.south)
  ]

    \coordinate (a1) at (0  ,0);
    \coordinate (a2) at (1  ,0);
    \coordinate (a3) at (0.75,1);
    \coordinate (b1) at (-0.25,1.5);
    \coordinate (b2) at ( 1.5 ,1.5);
    \coordinate (c1) at (-0.65,0.75);

    \draw (a1) -- (a2) -- (a3) -- cycle;
    \draw (b2) -- (a2);
    \draw (b1) -- (a3) -- (b2) -- (b1) -- (a1) -- (a3) -- cycle;
    \draw (a1) -- (c1) -- (b1);

    \fill[gray,opacity=0.2, draw=gray,thick, draw opacity=0.5]
      (barycentric cs:a1=0,a2=2,a3=1)--(barycentric cs:a1=2,a2=0,a3=1) --
      (barycentric cs:a1=0,b1=2,a3=1)--(barycentric cs:b2=2,b1=0,a3=1) --
      cycle;

    \node (a3a) at (a3) {$\bullet$} node at (a3a.north) {$\scriptstyle x$};

    \node at (barycentric cs:b2=1,a2=1 ,a3=1) {$\scriptstyle\Delta$};
    \node at (barycentric cs:a1=1,c1=1 ,b1=1) {$\scriptstyle\Delta$};

    \node at (0.425,-0.5) {(a)};
    
  \end{tikzpicture}
  \hspace{2cm}
  \begin{tikzpicture}[
      scale=1.2,join=round, line cap=round,commutative diagrams/every diagram,
      baseline=(current bounding box.south)
  ]
    \coordinate (a1) at (0  ,0);
    \coordinate (a2) at (1  ,0);
    \coordinate (a3) at (0.75,1);
    \coordinate (b1) at (-0.25,1.5);
    \coordinate (b2) at ( 1.5 ,1.5);
    \coordinate (b3) at ( 1.5,-0.5);
    \coordinate (c1) at (-0.5,-0.75);

    \draw[thick] (a1) -- (a2) -- (a3) -- cycle;

    \draw (b2) -- (a2) -- (b3) -- cycle;
    \draw (b1) -- (a3) -- (b2) -- (b1) -- (a1) -- (a3) -- cycle;
    \draw (b1) -- (c1) -- (b3) -- (a1) -- (c1) -- cycle;

    \node at (barycentric cs:a1=1,a2=1 ,a3=1) {$\Delta$};
    
    \node at (0.5,-1) {(b)};
  \end{tikzpicture}
    
  \caption{
    (a) The set $\star x$ and the disk $D_x$ (in grey).
    (b) The pattern $\star\Delta$: all tiles meeting $\Delta$.
  }
  \label{fig:starDelta}
\end{figure}%

\begin{figure}
\end{figure}


\section{Inverse limits of regular cover maps}
\label{sec:equicontinos}

Adding conditions to the inverse limit structure, we can get interesting 
results about the transverse dynamics of the corresponding laminations. One 
interesting case is the equicontinuous foliated spaces. 

Consider a projective system
\begin{equation}
  \label{ec:inv_sys}
  S_1 \xleftarrow{\;f_2\;} S_2 
        \xleftarrow{\;f_3\;} S_3
          \xleftarrow{\;f_4\;} \cdots
\end{equation}
where $S_k$ are compact finitely dimensional manifold and $f_k$ are regular 
covering maps. To avoid trivial situations, assume all bonding maps are not 
homeomorphisms, that is, the degree of each $f_k$ is greater than $1$. The 
inverse limit $S_\infty = \varprojlim(S_k,f_k)$ has a natural structure of 
matchbox manifold as it is obviously flattening, even on those trivial cases.

\begin{theorem}
  Each standard transversal of $S_\infty$ has a natural structure of profinite 
  group. Moreover, those structures are all isomorphic.
\end{theorem}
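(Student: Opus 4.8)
The plan is to reduce the statement to the familiar torsor structure carried by the fibres of a regular covering. Fix once and for all a base thread $\hat{x}=(x_k)\in S_\infty$, so that $f_k(x_k)=x_{k-1}$, and work with the standard transversal $T=\hat{f}_1^{-1}(x_1)$ through $\hat{x}$; since $S_1$ is connected every leaf surjects onto $S_1$ under $\hat{f}_1$, so $T$ is a complete transversal, and every other standard transversal $\hat{f}_1^{-1}(z)$ will be handled by the same construction. Writing $f_{1,k}=f_2\circ\cdots\circ f_k:S_k\to S_1$ and $F_k=f_{1,k}^{-1}(x_1)$, continuity of the coordinate projections identifies $T$ with $\varprojlim(F_k,f_k|_{F_k})$. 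Each $F_k$ is a finite set (of cardinality the degree of $f_{1,k}$), so $T$ is automatically a profinite \emph{space}; the content of the theorem is to upgrade this to a profinite \emph{group}.

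The group law will come from deck transformations. Put $G=\pi_1(S_1,x_1)$ and let $G_k=(f_{1,k})_*\pi_1(S_k,x_k)\le G$, the injective image of the fundamental group of the $k$-th cover, so that $G=G_1\supseteq G_2\supseteq\cdots$ is a descending chain of finite-index subgroups and $F_k$ is in canonical bijection with the coset space $G/G_k$. The essential input is that $f_{1,k}$ is a \emph{regular} covering, i.e.\ $G_k\trianglelefteq G$; then the deck group $\Delta_k=\mathrm{Deck}(S_k/S_1)\cong G/G_k$ is finite and acts freely and transitively on $F_k$, making $F_k$ a $\Delta_k$-torsor. Taking the base point $x_k\in F_k$ as origin, the orbit map $\delta\mapsto\delta\cdot x_k$ is a bijection $\Delta_k\xrightarrow{\sim}F_k$ transporting the group law of $\Delta_k$ to $F_k$ with identity $x_k$.

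Next I would check that the bonding maps respect these laws. Because $G_k\subseteq G_{k-1}$ are both normal in $G$, there is a canonical surjection $\rho_k:\Delta_k\cong G/G_k\twoheadrightarrow G/G_{k-1}\cong\Delta_{k-1}$, realised geometrically by descending a deck transformation of $S_k\to S_1$ to one of $S_{k-1}\to S_1$. A short equivariance check, $f_k(\delta\cdot y)=\rho_k(\delta)\cdot f_k(y)$ together with $f_k(x_k)=x_{k-1}$, shows that under the identifications $F_k\cong\Delta_k$ the restricted bonding map $f_k|_{F_k}$ is exactly the homomorphism $\rho_k$. Hence
\[
  T\;\cong\;\varprojlim(\Delta_k,\rho_k),
\]
an inverse limit of finite groups along surjective homomorphisms, therefore a profinite group whose identity is the chosen base thread $\hat{x}$; its subspace topology inside $S_\infty$ is the product topology, which is precisely the profinite topology, so multiplication and inversion are automatically continuous.

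The main obstacle, and where I expect the subtlety to lie, is exactly the normality step: passing from each individual $f_k$ being regular to the regularity of the composites $f_{1,k}$ (equivalently $G_k\trianglelefteq G$), since normality is not transitive and a priori one only gets the subnormal chain $G_{k+1}\trianglelefteq G_k$. I would read this normality off the hypotheses directly as the intended meaning of an inverse sequence of regular covers of the fixed base $S_1$, and carry out the deck-transformation bookkeeping above; once it is in hand the torsor structure and the homomorphism property of the bonding maps are routine. Finally, for the clause that all standard transversals are isomorphic, note that two such transversals $\hat{f}_1^{-1}(z)$ and $\hat{f}_1^{-1}(z')$ differ only by the choice of base point in the connected base $S_1$. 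A path from $z$ to $z'$ induces a holonomy homeomorphism intertwining the finite deck groups $\Delta_k(z)\cong\Delta_k(z')$ through an inner automorphism of $G$, and these assemble into a topological isomorphism of the profinite limits. Thus all the resulting profinite groups are isomorphic, completing the plan.
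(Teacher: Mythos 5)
Your proposal follows essentially the same route as the paper: identify the fibre $F_k$ over $x_1$ with the deck group $\Delta_k$ of the composite cover $S_k\to S_1$ via the base point $x_k$, check that under this identification the bonding maps become the quotient homomorphisms $G/G_k\to G/G_{k-1}$, and pass to the limit to get a profinite group with identity the base thread. The normality subtlety you flag (regularity of the composites $f_{1,k}$, which does not follow from regularity of each $f_k$ alone) is likewise taken for granted in the paper's proof, which simply asserts that each $f_{k,k'}$ is a regular covering and that $\pi_1(S_k)\trianglelefteq\pi_1(S_1)$.
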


\begin{proof}
  Fix a thread $(x_k)\in S_\infty$. For now on $x_k$ will be the base point of 
  $S_k$ for all homotopy and cover related tools.
  
  Consider the transversal $T_1 = q_1^{-1}(x_1)$. Now if $k > k'$,
  \[
    f_{k,k'} = f_k \circ f_{k-1}\circ \cdots \circ f_{k'+1} : S_k\to S_{k'}
  \]
  is also a regular covering map. By definition $f_{k,k-1} = f_k$. Denote by 
  $\Delta_{k,k'}$ the deck transformation group of $f_{k,k'}$ acting on 
  $D_{k,k'} = f_{k,k'}^{-1}(x_{k'})$. As $f_{k,k'}$ is regular, $\Delta_{k,k'}$ 
  acts freely and transitively on $D_{k,k'}$. 
  
  We have then the identification 
  \begin{equation}
    \label{ec:id_Detla_D}
    \delta\in\Delta_{k,1} \mapsto \delta(x_k)\in D_{k,1}.
  \end{equation}
  Therefore the transversal $T_1$ can be thought as
  \[
    T_1 =     q_1^{-1}(x_1) 
        =     \varprojlim(D_{k,1}     ,f_{k,k-1}|_{D_{k,1}}) 
        \cong \varprojlim(\Delta_{k,1}, f_{k,k-1}|_{D_{k,1}})
  \]
  with the previous identification. Let us see $f_{k,k-1}$ is a homomorphism 
  of groups considered with domain $\Delta_{k,1}$ and range $\Delta_{k-1,1}$.
  \medskip
  
  As $f_{k,1}$ is a regular covering $\pi_1(S_k)\trianglelefteq\pi_1(S_1)$ and 
  $\Delta_{k,1} = \nicefrac{\pi_1(S_1)}{\pi_1(S_k)}$ (with the usual 
  identifications). As the same holds with $f_{k,k'}$ we can think that 
  \[
    \cdots \trianglelefteq
      \pi_1(S_3) \trianglelefteq
        \pi_1(S_2)\trianglelefteq
          \pi_1(S_1).
  \]
  By the isomorphism theorem we have the identification
  \[
    \Delta_{k,1} = 
      \frac{\Delta_{k+1,1}}{\Delta_{k+1,k}}.
  \]
  Remember that $f_{k,k-1}$ is the quotient map
  $S_k \to \nicefrac{S_k}{\Delta_{k,k-1}} = S_{k-1}$. This, with the 
  identification \eqref{ec:id_Detla_D} and the normality of the groups
  \begin{equation}
    \label{ec:demonio}
    \begin{aligned}
      f_{k,k-1}(g) &\overset{\eqref{ec:id_Detla_D}}{=}
        f_{k,k-1}(g(x_k)) 
          =  \bigl\{\,
                h(g(x_k))         \bigst h\in\Delta_{k,k-1}
             \,\bigr\} \\
          &= \bigl\{\,
                g(\tilde{h}(x_k)) \bigst \tilde{h}\in\Delta_{k,k-1}
             \,\bigr\} 
          =  g\circ \bar{h} \bigl(
                \{\, \tilde{h}(x_k) \st \tilde{h}\in\Delta_{k,k-1} \,\} 
             \bigr)
    \end{aligned}
  \end{equation}
  for any $\bar{h}\in\Delta_{k,k-1}$. Again as $f_{k,k-1}$ is a quotient map 
  \[
    f_{k,k-1}(x_k) = x_{k-1} = 
      \bigl\{\, \tilde{h}(x_k) \bigst \tilde{h}\in\Delta_{k,k-1} \,\bigr\},
  \]
  so we conclude from \eqref{ec:demonio}
  \[
    f_{k,k-1}(g) = g\Delta_{k,k-1}.
  \]
  Therefore the transversal $T_1$ has the structure of the profinite group
  \[
    \varprojlim(\Delta_{k,1},f_{k,k-1}).
  \]
  Notice that the identity of the group is identified with $(x_k)$. But the 
  election of another thread does not change the isomorphism class.

  Finally, it is well known that the isomorphism class does not change by 
  telescoping, so for any transversal $q_k^{-1}(\tilde{x}_k)$ for any 
  $\tilde{x}_k\in S_k$ we obtain the same profinite group structure.
\end{proof}

All leaves of $S_\infty$ share the fundamental group $\bigcap_k \pi_1(S_k)$  
(see~\cite{mccord:ILSWCM}). Therefore each leaf $L$ is a normal covering space 
of $S_1$ having 
\[
  \Delta_{\infty,1} = \frac{\pi_1(S_1)}{\bigcap_k\pi_1(S_k)}
\]
as deck transformation group. The quotient map 
$\Delta_{\infty,1}\to\Delta_{k,1}$ gives rise to a natural representation of 
$\Delta_{\infty,1}$ in $T_1$:
\[
  g\in\Delta_{\infty,1}
    \mapsto (g\Delta_{k,1}) \in \varprojlim(\Delta_{k,1},f_{k+1,k}).
\]
This representation is faithful as if $g\in\Delta_{\infty,1}$ such that 
$g\Delta_{k,1} = \Delta_{k,1}$ for all $k$ implies $g$ acts trivially in all 
levels $S_k$. Then $g$ acts trivially on $L$, so it should be the identity.

In that way, $\Delta_{\infty,1}$ is a subgroup of $T_1$. Hence we have 
the free action of $\Delta_{\infty,1}$ on $T_1$ by left translations. We have 
then a diagonal action given by
\[
  \Psi: 
    h\cdot\bigl(\lambda, (\tilde{x}_k)_k \bigr) 
                               \mapsto \bigl( h^{-1}\lambda, h(x_k)_k \bigr).
\]
It is straightforward to see that:

\begin{lemma}
  \label{lemma:isaction}
  The lamination $S_\infty$ is the suspension of $\Psi$. Therefore the 
  transverse dynamics of $S_\infty$ given by the action of $\Delta_{\infty,1}$ 
  on $T_1$.\qed
\end{lemma}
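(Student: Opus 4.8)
The plan is to realise $S_\infty$ explicitly as the quotient $(L\times T_1)/\Delta_{\infty,1}$ defining the suspension of $\Psi$, by producing a foliated homeomorphism between the two. The starting observation is that the profinite group $T_1=\varprojlim(\Delta_{k,1},f_{k,k-1})$ acts on all of $S_\infty$, not only on the fibre over $x_1$: for $t=(t_k)\in T_1$ and a thread $(y_k)\in S_\infty$ I set $t\cdot(y_k)=(t_k\cdot y_k)$, where $t_k\in\Delta_{k,1}$ acts on $S_k$ as a deck transformation of $f_{k,1}$. That $(t_k\cdot y_k)$ is again a thread is exactly the equivariance $f_{k,k-1}(t_k\cdot y_k)=f_{k,k-1}(t_k)\cdot f_{k,k-1}(y_k)$ together with the compatibility $f_{k,k-1}(t_k)=t_{k-1}$ built into $T_1$; this equivariance is the content of the computation that $f_{k,k-1}$ is a group homomorphism carried out in the previous theorem. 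On the leaf $L$ through $(x_k)$ this $T_1$-action restricts to the deck action of $\Delta_{\infty,1}\subseteq T_1$.

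With this at hand I would define $\Phi\colon L\times T_1\to S_\infty$ by $\Phi(\lambda,t)=t\cdot\lambda$ and check three things. First, $\Phi$ is onto: given $(y_k)\in S_\infty$, since $L$ covers $S_1$ I can pick $\lambda\in L$ with $\lambda_1=y_1$, and then regularity of each $f_{k,1}$ provides a unique $t_k\in\Delta_{k,1}$ with $t_k\cdot\lambda_k=y_k$; applying $f_{k,k-1}$ and using freeness shows the $t_k$ assemble into a genuine thread $t\in T_1$ with $t\cdot\lambda=(y_k)$. Second, $\Phi$ is foliated: it carries the plaque $L\times\{t\}$ onto the leaf $t\cdot L$ and the fibre $\{\lambda\}\times T_1$ onto the transverse orbit $T_1\cdot\lambda$; for $\lambda=(x_k)$ this orbit is precisely the standard transversal $T_1=q_1^{-1}(x_1)$, and $t\mapsto t\cdot(x_k)$ is the identity under the profinite identification \eqref{ec:id_Detla_D}, so $\Phi$ matches the canonical transversal of the suspension with $T_1$ and intertwines the holonomy pseudogroups.

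The heart of the argument, and the step I expect to cost the most, is identifying the fibres of $\Phi$ with the orbits of $\Psi$. If $\Phi(\lambda,t)=\Phi(\lambda',t')$ then $t\cdot L=t'\cdot L$, so $s:=(t')^{-1}t$ stabilises $L$ setwise, and everything reduces to computing $\{\,s\in T_1\mid s\cdot L=L\,\}$. As leaves are disjoint or equal, $s\cdot L=L$ is equivalent to $s\cdot(x_k)\in L$, that is, to $s\cdot(x_k)\in L\cap T_1$. Here I would invoke the earlier description of the leaf: $L\cap T_1$ is the holonomy orbit of $(x_k)$, which by the faithful representation established just above the lemma is the orbit $\Delta_{\infty,1}\cdot(x_k)$. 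Since the $T_1$-action on its own fibre is free, $s\cdot(x_k)\in\Delta_{\infty,1}\cdot(x_k)$ forces $s\in\Delta_{\infty,1}$; writing $h=s$ one reads off $\lambda'=h\cdot\lambda$ and $t'=t\,h^{-1}$, which is exactly the relation defining $\Psi$ (up to the inversion convention built into it). Hence $\Phi$ descends to a continuous bijection $(L\times T_1)/\Delta_{\infty,1}\to S_\infty$; being a continuous bijection from a compactum to a Hausdorff space it is a homeomorphism, and by the second paragraph it is foliated. This exhibits $S_\infty$ as the suspension of $\Psi$, and since the holonomy pseudogroup of a suspension is generated by the defining action, the transverse dynamics of $S_\infty$ is the action of $\Delta_{\infty,1}$ on $T_1$, as claimed.
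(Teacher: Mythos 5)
Your proof is correct and is exactly the argument the paper leaves implicit (it asserts the lemma as ``straightforward'' with no written proof): you use precisely the ingredients set up in the surrounding text --- the coordinatewise action of the profinite group $T_1$ on $S_\infty$, surjectivity of $(\lambda,t)\mapsto t\cdot\lambda$ via regularity of the coverings, and the identification of the setwise stabiliser of $L$ with $\Delta_{\infty,1}=L\cap T_1$ --- to exhibit $S_\infty$ as $(L\times T_1)/\Delta_{\infty,1}$. The only discrepancy, which you correctly flag, is that your relation $(\lambda,t)\sim(h\lambda,th^{-1})$ uses right translation on $T_1$ where the paper's $\Psi$ uses left translation; the two suspensions are conjugate via $t\mapsto t^{-1}$ (and the paper's displayed formula for $\Psi$ contains a typo anyway), so this is harmless.
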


As $T_1$ is a group we can consider a left invariant distance, in fact, we can 
construct it explicitly. Consider the discrete distance 
$\delta_{\Delta_{k,1}}$ on each group $\Delta_{k,1}$ which is $1$ on different 
elements. The product metric restricted to $T_1$
\begin{equation}
  \label{ec:distance_cod0}
  d\bigl( (x_k), (y_k) \bigr) 
          = \sum_{k=2}^\infty \frac1{2^k} \delta_{\Delta_{k,1}}(x_k,y_k)
\end{equation}
is invariant by the action of $T_1$ on itself, so it is invariant by 
the action of the subgroup $\Delta_{\infty,1}$. Therefore any lamination given 
as an inverse limit of manifolds and regular coverings preserves a distance 
function. A.~Clark and S.~Hurder shows in \cite{CH:HMM} that those laminations 
are exactly the equicontinuous ones. 

\begin{theorem}
  \label{thm:equi-iso}
  A equicontinuous lamination of codimension zero preserves a transverse 
  metric.
\end{theorem}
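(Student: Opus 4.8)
The plan is to reduce Theorem~\ref{thm:equi-iso} to the explicit construction already assembled for inverse limits of manifolds and regular covering maps, and to supply the bridge between the abstract hypothesis (equicontinuity) and that concrete setting. By the result of Clark and Hurder \cite{CH:HMM}, an equicontinuous codimension zero lamination is homeomorphic (as a foliated space) to an inverse limit $S_\infty = \varprojlim(S_k,f_k)$ where the $S_k$ are compact manifolds and the $f_k$ are regular covering maps of degree greater than $1$. This is precisely the class of systems analysed in Section~\ref{sec:equicontinos}. The strategy is therefore to invoke this identification first, and then to transport the invariant metric constructed there back to the original lamination.

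First I would recall that, for such a system, the preceding Theorem identifies each standard transversal $T_1 = q_1^{-1}(x_1)$ with the profinite group $\varprojlim(\Delta_{k,1}, f_{k,k-1})$, and that Lemma~\ref{lemma:isaction} exhibits $S_\infty$ as the suspension of the diagonal action $\Psi$ of $\Delta_{\infty,1}$ on $T_1$. The key point is that the transverse dynamics is realised entirely as the action of a subgroup $\Delta_{\infty,1} \leq T_1$ by left translations. Next I would observe that the explicit metric $d$ of \eqref{ec:distance_cod0}, built as the restriction of a product of discrete metrics on the quotient groups $\Delta_{k,1}$, is left invariant under the full group $T_1$ acting on itself. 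Since $\Delta_{\infty,1}$ is embedded as a subgroup of $T_1$ and acts by left translations, the metric $d$ is in particular invariant under $\Delta_{\infty,1}$.

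Putting these together, the holonomy pseudogroup of $S_\infty$ is generated by the left translations of $\Delta_{\infty,1}$ on $T_1$, each of which is an isometry for $d$; hence $d$ is a transverse metric preserved by the holonomy. Because the homeomorphism of Clark--Hurder is a foliated equivalence, it carries the holonomy pseudogroup of the original lamination to that of $S_\infty$, and pulling back $d$ along the induced homeomorphism on transversals yields a holonomy-invariant transverse metric on the given equicontinuous lamination. This completes the argument.

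I expect the main obstacle to be checking that the Clark--Hurder equivalence is genuinely a foliated (matchbox-manifold) homeomorphism rather than merely a homeomorphism of underlying spaces, so that holonomy is transported correctly and the invariance of $d$ really descends to transverse invariance on $(M,\cL)$; once that is granted, the remainder of the proof is simply the bookkeeping of restricting a left-invariant product metric to a subgroup action, which is routine.
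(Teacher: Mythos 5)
Your proposal matches the paper's own argument: the paper also deduces the theorem by combining the Clark--Hurder characterisation of equicontinuous codimension zero laminations as inverse limits of regular coverings with the explicit left-invariant product metric of \eqref{ec:distance_cod0} on the profinite transversal $T_1$, invariant under the action of $\Delta_{\infty,1}$ from Lemma~\ref{lemma:isaction}. This is essentially the same proof, and your closing caveat about the equivalence being foliated is the only point the paper leaves implicit.
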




\end{document}